\newtheorem{thm}{Theorem}[section]
\newtheorem{rmk}[thm]{Remark}
\newtheorem{lemma}[thm]{Lemma}
\def\R{\mathbb{R}}
\def\ep{\epsilon}
\def\C{\mathbb{C}}
\newtheorem*{theorem-non}{Theorem}
\def\ep{\epsilon}
\def\R{\mathbb{R}}
\def\C{\mathbb{C}}
\def\cal R{\mathcal R}
\newcommand{\jznote}[1]{#1}
\title{Strong Arnold chord conjecture via normalized capacities}
\author{Jungsoo Kang}
\email{jungsoo.kang@snu.ac.kr}
\address{Seoul National University, Department of Mathematical Sciences, Research Institute in
Mathematics, 08826 Seoul, South Korea}
\author{Jun Zhang}
\email{jzhang4518@ustc.edu.cn}
\address{The Institute of Geometry and Physics, University of Science and Technology of China, 96 Jinzhai Road, Hefei Anhui, 230026, China}
\begin{document}

\maketitle

\begin{abstract} We show that every dynamically convex toric domain in $\R^4$ admits a $C^1$-neighborhood $\mathcal U$ within the space  of star-shaped domains of $\R^4$ with the following property:~for any $X  \in \mathcal U$, every Legendrian knot in $\partial X$ admits a Reeb chord with distinct endpoints. A higher dimensional analog is also discussed.   
\end{abstract}

\vspace{-2mm}

\section{Introduction}  \label{sec-motivation} 

Let $X$ be a star-shaped domain in $\R^{2n}$, namely it is a compact subset such that the boundary $\partial X$ is smooth and transverse to all the lines through the origin. 
 The standard Liouville form $\lambda:=\frac{1}{2}\sum_{i=1}^nr_i^2d\theta_i$ on $\R^{2n}$ restricted to $\partial X$ is a contact form, where $(r_i,\theta_i)$ are the polor coordinates on each factor of $\C^n=\R^{2n}$. The Reeb vector field $R$ on $\partial X$ is defined by $\iota_R(d\lambda|_{\partial X})=0$ and $\lambda(R)=1$. A submanifold $\Lambda\subset\partial X$ is called Legendrian if $\dim\Lambda= n-1$ and $T\Lambda\subset \ker(\lambda|_{\partial X})$.
 
\jznote{ In  \cite{Moh}, Mohnke proved that, for every closed Legendrian submanifold $\Lambda \subset \partial X$, there is a Reeb chord $\gamma:[0,T]\to\partial X$ with $\gamma(0),\gamma(T)\in\Lambda$. This answers a weaker form of the Arnold conjecture in \cite[Section 8]{Arn86}. In fact, the  conjecture even asks for the existence of a genuine Reeb chord, i.e.~$\gamma(0) \neq \gamma(T)$.} We call the original form of the  conjecture in \cite{Arn86} the {\it strong} Arnold chord conjecture. There has been not much study on the strong Arnold chord conjecture. It was proved for Euclidean balls in \cite{Zil16} and for  convex domains in \cite{Kan23}. If we restrict the class of closed Legendrian submanifolds to those which admit metics with nonpositive curvature, the conjecture is true for star-shaped domains sufficiently $C^1$-close to a Euclidean ball, as established in \cite{CM18}. We also note that there is a star-shaped domain arbitrarily $C^0$-close to a Euclidean ball for which  the strong Arnold chord conjecture is false, see Hutchings' example  \cite[Remark 1.3.(b)]{Kan23}.

A goal of this note is to extend the result in \cite{CM18} mentioned above to   strictly monotone toric domains. The toric domain $X_\Omega$ associated with a domain $\Omega\subset\R^n_{\geq0}$ is defined by the preimage of $\Omega$ under the moment map
\[
\mu:\C^n\to\R^n_{\geq0},\qquad \mu(z):=\pi(|z_1|^2,\dots,|z_n|^2),
\]
that is, $X_\Omega:=\mu^{-1}(\Omega)$. A toric domain $X_\Omega$ is called monotone (or strictly monotone) if $X_\Omega$ is compact with smooth boundary and the outward normals at every point in $\partial\Omega\cap\R^n_{>0}$ have nonnegative (or positive) entries. As shown in  \cite[Proposition 1.8]{GHR}, strictly monotone toric domains are dynamically convex, i.e.~the Conley-Zehnder index of any closed Reeb orbit on $\partial X_\Omega$ is at least $n+1$. The converse is also true in dimension 4. 

To state our result, let us introduce some notations. For a star-shaped domain $X\subset\R^{2n}$, we denote by $\mathcal{L}(X)$ the set of all closed Legendrian submanifolds in $\partial X$. We also denote by  $\mathcal A_{\rm min}(\Lambda)$ the minimal period among the periods of all Reeb chords with endpoints on $\Lambda\in\mathcal{L}(X)$ and by $\mathcal A_{\rm min}( X)$ the minimal period among the periods of all periodic Reeb orbits on $\partial X$.

\begin{thm} \label{main-thm} Let $X_{\Omega} \subset \R^4$ be a dynamically convex toric domain. Then there exists a $C^1$-neighborhood $\mathcal U$ of $X_{\Omega}$ in the space of star-shaped domains in $\R^4$ such that, for every $X \in \mathcal U$ (not necessarily toric), we have 
\[ \sup_{\Lambda \in\mathcal{L}( X)} \mathcal A_{\rm min}(\Lambda) < \mathcal A_{\rm min}(X). \]
In particular, the strong Arnold chord conjecture holds for every $X \in \mathcal U$.  \end{thm}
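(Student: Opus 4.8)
The plan is to dispose of the ``in particular'' clause first and then to prove the displayed strict inequality by squeezing the two sides between symplectic capacities. For the reduction, I would observe that if a Reeb chord $\gamma\co[0,T]\to\partial X$ of $\Lambda$ satisfies $\gamma(0)=\gamma(T)$, then by uniqueness of the Reeb flow $\gamma$ is a closed Reeb orbit, so its period is at least $\mathcal A_{\rm min}(X)$. Consequently, once $\mathcal A_{\rm min}(\Lambda)<\mathcal A_{\rm min}(X)$ is known, the chord realizing $\mathcal A_{\rm min}(\Lambda)$ (which exists by Mohnke's theorem) must have $\gamma(0)\neq\gamma(T)$, i.e.\ it is a genuine chord. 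Thus the entire content is the strict inequality for every $X\in\mathcal U$.

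For the inequality I would prove two capacity estimates of opposite type. On the orbit side, since $X_\Omega$ is dynamically convex, the minimal orbit action is detected by a normalized capacity: $\mathcal A_{\rm min}(X)=c^1_{\mathrm{GH}}(X)$, the first Gutt--Hutchings capacity, whose generating orbit has Conley--Zehnder index $n+1$; for the toric model this is computed by the explicit formula underlying \cite{GHR}. On the chord side, I would introduce a \emph{Legendrian} capacity $c_{\mathrm{Leg}}$ in the spirit of the Cieliebak--Mohnke Lagrangian capacity, defined by a min--max over punctured holomorphic disks/half-cylinders asymptotic to Reeb chords and with boundary on the Lagrangian cone over $\Lambda$, arranged so that every $\Lambda\in\mathcal L(X)$ carries a chord of period at most $c_{\mathrm{Leg}}(X)$; this yields the uniform bound $\sup_{\Lambda}\mathcal A_{\rm min}(\Lambda)\le c_{\mathrm{Leg}}(X)$. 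Crucially, capacities of Lagrangian/Legendrian type are genuinely smaller than symplectic action, and the heart of the argument is the \emph{strict} comparison $c_{\mathrm{Leg}}(X_\Omega)<c^1_{\mathrm{GH}}(X_\Omega)=\mathcal A_{\rm min}(X_\Omega)$ for dynamically convex toric domains. Dynamical convexity is indispensable here: by Hutchings' example \cite{Kan23} the conclusion fails $C^0$-close to a ball without it, so the CZ $\ge n+1$ hypothesis must enter exactly at this comparison.

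To promote the model estimate to a neighborhood I would use that both $c^1_{\mathrm{GH}}$ and $c_{\mathrm{Leg}}$ are monotone under symplectic inclusion, hence $C^0$- and a fortiori $C^1$-continuous on star-shaped domains: a $C^1$-small $X$ is sandwiched between rescalings $(1-\ep)X_\Omega\subset X\subset(1+\ep)X_\Omega$, so both capacities move by $O(\ep)$ and the strict gap at $X_\Omega$ survives, furnishing $\mathcal U$. The main obstacle is the chord side. First, one must show the min--max period is controlled by $c_{\mathrm{Leg}}(X)$ \emph{uniformly} over the infinite-dimensional family $\mathcal L(X)$, which requires upper semicontinuity of $\Lambda\mapsto\mathcal A_{\rm min}(\Lambda)$ together with SFT-type compactness for the disks. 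Second, one must extract strictness from dynamical convexity: the argument should show that a chord of period $\ge\mathcal A_{\rm min}(X)$ would, under the index and action constraints forced by the CZ $\ge n+1$ condition, break into a configuration containing a closed Reeb orbit of action strictly below $\mathcal A_{\rm min}(X)$, a contradiction. Making this degeneration-and-index bookkeeping precise, and verifying that the resulting gap is quantitative (not merely a non-attained supremum), is where I expect the real difficulty to lie.
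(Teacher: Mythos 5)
Your overall architecture --- bound $\sup_\Lambda \mathcal A_{\rm min}(\Lambda)$ above by a Lagrangian-type capacity, bound $\mathcal A_{\rm min}(X)$ below by a ball-normalized capacity, establish a strict gap at $X_\Omega$, and propagate it by continuity --- matches the paper, and your reduction of the ``in particular'' clause is fine. But the heart of your argument is missing. You propose to manufacture a new ``Legendrian capacity'' via punctured holomorphic disks, and to extract the strict comparison $c_{\rm Leg}(X_\Omega)<\mathcal A_{\rm min}(X_\Omega)$ from an SFT degeneration-and-index argument driven by the Conley--Zehnder condition; neither step is carried out, and neither is needed. Mohnke's theorem already gives the uniform bound $\mathcal A_{\rm min}(\Lambda)\le c_{\rm Lag}(X)$ for every closed Legendrian $\Lambda$ --- the right-hand side does not depend on $\Lambda$, so your worries about uniformity over $\mathcal L(X)$ and semicontinuity in $\Lambda$ evaporate. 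The strictness comes from elementary toric geometry, not from index bookkeeping: in dimension $4$ a toric domain is dynamically convex if and only if it is strictly monotone; for strictly monotone $X_\Omega$ one has $\sup_\Lambda\mathcal A_{\rm min}(\Lambda)=c_{\rm Lag}(X_\Omega)=c_\square(X_\Omega)$ (cube-normalized capacities coincide, plus an explicit Legendrian in the corner torus $\mu^{-1}(a,\dots,a)$ whose minimal chord has period $a$) and $\mathcal A_{\rm min}(X_\Omega)=c_{\rm Gr}(X_\Omega)$, and the strict gap $c_\square(X_\Omega)=a<a+\kappa_\Omega\le c_{\rm Gr}(X_\Omega)$ holds simply because the ball $B^{4}(a)$ inscribed in the optimal cube $P^{4}(a)\subset X_\Omega$ can be inflated inside a strictly monotone domain. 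Dynamical convexity enters only through its equivalence with strict monotonicity, not through a compactness argument.

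There is a second gap on the orbit side: you want $\mathcal A_{\rm min}(X)=c^1_{\mathrm{GH}}(X)$ for every $X$ in the neighborhood, so that monotonicity under $(1-\ep)X_\Omega\subset X$ controls $\mathcal A_{\rm min}(X)$ from below. That identity is not available for general (non-convex, non-toric) star-shaped domains; indeed $\mathcal A_{\rm min}$ fails to be monotone under symplectic embeddings --- which is exactly why Hutchings' $C^0$-counterexample exists --- and a spectral capacity is only guaranteed to be $\ge\mathcal A_{\rm min}$, the wrong direction for your purposes. The correct and much softer substitute is lower semicontinuity of $\mathcal A_{\rm min}$ under $C^1$-perturbation via Arzela--Ascoli, giving $\mathcal A_{\rm min}(X)>\mathcal A_{\rm min}(X_\Omega)-\ep$ on a $C^1$-neighborhood; combined with continuity of $c_{\rm Lag}$ (which \emph{is} a genuine monotone capacity) this closes the argument with the quantitative gap $\kappa_\Omega$.
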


A higher dimensional analog of the above theorem is true under a technical assumption \cite[Assumption 7.1]{Per}, which is about the existence of a virtual perturbation scheme for linearized contact homology. 

\begin{thm} \label{main-thm-2} We assume \cite[Assumption 7.1]{Per}. Let $X_{\Omega} \subset \R^{2n}$ be a strictly monotone toric domain.  Then there exists a $C^1$-neighborhood $\mathcal U$ of $X_{\Omega}$ in the space of star-shaped domains in $\R^{2n}$ such that, for every $X \in \mathcal U$ (not necessarily toric), 
\[
\sup_{\Lambda\in\mathcal{L}_{\rm torus}(X)} \mathcal A_{\rm min}(\Lambda) < \mathcal A_{\rm min}(X),
\]
where $\mathcal{L}_{\rm torus}(X)$ denotes the set of all Legendrian {\rm tori} in $\partial X$. \end{thm}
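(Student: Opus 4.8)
The plan is to bound the two sides of the inequality by quantities that behave well under $C^1$-perturbation, to establish the strict inequality at $X_{\Omega}$ itself, and then to propagate it to a neighborhood. For the right-hand side I would prove that $\mathcal A_{\rm min}$ is lower semicontinuous in the $C^1$-topology on star-shaped domains: if $X_k\to X_{\Omega}$ in $C^1$, then since $\lambda$ and $d\lambda$ are fixed forms and the tangent planes to $\partial X_k$ converge in $C^0$, the Reeb vector fields converge in $C^0$, so any sequence of closed Reeb orbits of uniformly bounded action subconverges by Arzel\`a--Ascoli to a closed Reeb orbit of $X_{\Omega}$ of the same action; a uniform positive lower bound on periods (a field $C^0$-close to a nonzero constant on a shrinking ball has no closed orbit) rules out collapse, giving $\liminf_{X\to X_{\Omega}}\mathcal A_{\rm min}(X)\ge\mathcal A_{\rm min}(X_{\Omega})$. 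For the left-hand side I would dominate the minimal chord action of every Legendrian torus by the Cieliebak--Mohnke Lagrangian capacity $c_L$ from \cite{CM18}, which is monotone and conformal, hence $C^0$-continuous. Granting
\[
\sup_{\Lambda\in\mathcal{L}_{\rm torus}(X)}\mathcal A_{\rm min}(\Lambda)\le c_L(X)\qquad\text{and}\qquad c_L(X_{\Omega})<\mathcal A_{\rm min}(X_{\Omega}),
\]
an $\ep/2$ argument yields a $C^1$-neighborhood $\mathcal U$ on which $\sup_{\Lambda}\mathcal A_{\rm min}(\Lambda)<\mathcal A_{\rm min}(X)$. The strong chord conjecture is then formal: a chord $\gamma\co[0,T]\to\partial X$ realizing $\mathcal A_{\rm min}(\Lambda)$ with $\gamma(0)=\gamma(T)$ would be a closed Reeb orbit of action $T\ge\mathcal A_{\rm min}(X)$, contradicting $T<\mathcal A_{\rm min}(X)$, so its endpoints must be distinct.

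The chord bound $\sup_{\Lambda}\mathcal A_{\rm min}(\Lambda)\le c_L(X)$ is the heart of the matter and the step I expect to be the main obstacle. I would adapt the punctured holomorphic curve argument of \cite{CM18}: after stretching the neck along $\partial X$, a holomorphic disk of minimal positive area with boundary on a Lagrangian built from $\Lambda$ (the completed half-cone $\R_{>0}\cdot\Lambda$) has area at most $c_L(X)$ by the definition of the Lagrangian capacity together with a monotonicity estimate, and in the limit it must carry a negative puncture asymptotic to a Reeb chord whose action is dominated by that area. The torus hypothesis enters precisely to exclude low-area intrinsic disks: a flat metric on $\Lambda$ supports no nonconstant holomorphic disk below the relevant threshold, forcing the limit configuration to detect a genuine chord rather than an internal bubble. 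Making this count rigorous in arbitrary dimension is exactly where I would invoke \cite[Assumption 7.1]{Per}, which supplies the nonvanishing of the relevant class and the transversality of the moduli spaces through a virtual perturbation scheme for linearized contact homology; in dimension four (Theorem \ref{main-thm}) embedded contact homology furnishes the same input unconditionally.

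It remains to verify the strict gap $c_L(X_{\Omega})<\mathcal A_{\rm min}(X_{\Omega})$ for a strictly monotone toric domain. Here I would use the explicit toric descriptions of both invariants: $c_L(X_{\Omega})$ is the value $a$ for which the diagonal point $(a,\dots,a)$ lies on $\partial\Omega$, the product Lagrangian torus over that point being area-maximizing with minimal disk area $a$, whereas $\mathcal A_{\rm min}(X_{\Omega})$ is read off from the facets of $\partial\Omega$ meeting the coordinate axes. For the Euclidean ball this is the inequality $c_L(B^{2n}(1))=\tfrac{\pi}{n}<\pi=\mathcal A_{\rm min}(B^{2n}(1))$ for $n\ge 2$, and in general strict monotonicity --- equivalently dynamical convexity by \cite[Proposition 1.8]{GHR} --- is what keeps $\mathcal A_{\rm min}(X_{\Omega})$ large relative to the diagonal value and the gap open. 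Combining this computation with the $C^0$-continuity of $c_L$ and the lower semicontinuity of $\mathcal A_{\rm min}$ completes the proof.
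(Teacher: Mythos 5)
Your overall architecture coincides with the paper's: bound the chord side by the Lagrangian capacity via Mohnke's theorem, use continuity of $c_{\rm Lag}$ and lower semicontinuity of $\mathcal A_{\rm min}$ (Arzel\`a--Ascoli) to pass to a $C^1$-neighborhood, and reduce everything to a strict gap $c_{\rm Lag}(X_\Omega)<\mathcal A_{\rm min}(X_\Omega)$ at the toric domain itself. The genuine gap is that this last inequality --- which you correctly identify as the remaining task --- is never actually proved for a general strictly monotone toric domain: you verify it for the ball and then assert that ``strict monotonicity keeps the gap open.'' Moreover, your characterization of $\mathcal A_{\rm min}(X_\Omega)$ as ``read off from the facets of $\partial\Omega$ meeting the coordinate axes'' is not correct as stated: a priori the minimal period could be realized by any torus $\mu^{-1}(w)$ with rational normal direction, and the content of Lemma \ref{lem-abs-min-cap} is precisely the estimate $w\cdot\tilde\nu\geq w_1+\cdots+w_n\geq R$ for all such $w$, identifying $\mathcal A_{\rm min}(X_\Omega)$ with $c_{\rm Gr}(X_\Omega)$, the largest inscribed simplex. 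Once both sides are identified with normalized capacities ($c_{\rm Lag}=c_\square=a$ where $(a,\dots,a)\in\partial\Omega$, and $\mathcal A_{\rm min}=c_{\rm Gr}$), the strict gap follows from the concrete inclusion $B^{2n}(a)\subset P^{2n}(a)\subset X_\Omega$ together with strict monotonicity, which yields $B^{2n}(a+\kappa_\Omega)\subset X_\Omega$ for some $\kappa_\Omega>0$; this quantitative $\kappa_\Omega$ is also exactly what feeds the $\ep/2$ argument. You need to supply this step, or an equivalent one.

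A secondary but substantive issue is where \cite[Assumption 7.1]{Per} enters. You invoke it to make the Mohnke-type neck-stretching chord bound rigorous, but that inequality, $\sup_\Lambda\mathcal A_{\rm min}(\Lambda)\leq c_{\rm Lag}(X)$, is unconditional --- it is the proof of \cite[Theorem 4]{Moh} and needs no virtual perturbation scheme. The assumption is needed for the step you treat as a routine computation: the upper bound $c_{\rm Lag}(X_\Omega)\leq c_\square(X_\Omega)$, i.e.\ that $c_{\rm Lag}$ is cube normalized (equivalently $c_{\rm Lag}(N^{2n}(a))=a$), which rests on \cite[Theorem 7.65]{Per} combined with \cite[Theorem 2]{GPR}. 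Without that input, your assertion that $c_{\rm Lag}(X_\Omega)$ equals the diagonal value $a$ is unjustified, and the whole gap argument collapses.
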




Let us end this introduction with a couple of remarks. 

\begin{rmk}\label{reg} The above results are false for $C^0$-neighborhoods due to Hutchings' example \cite[Remark 1.3.(b)]{Kan23} mentioned eariler. \end{rmk}
\begin{rmk}
The above results do not hold for monotone toric domains (that are not {\rm strictly} monotone toric). Let us consider  $X_\Omega\subset\R^4$, where $\Omega\subset\R^2_{\geq0}$ is a smooth approximation of the domain 
\[
\{(x,y)\in \R^2_{\geq0} \mid x\leq \epsilon,\; y\leq 3\}\cup \{(x,y)\in \R^2_{\geq0} \mid x+y\leq 2\}
\]
for small $\epsilon>0$. Then $A_{\rm min}(X_\Omega)=\ep$ while $\displaystyle\sup_{\Lambda \in\mathcal{L}( X_\Omega)} \mathcal A_{\rm min}(\Lambda)\geq\mathcal A_{\rm min}(\Lambda')=1$ where $\Lambda'$ is a Legendrian knot in $\mu^{-1}(1,1)\subset\partial X_\Omega$ (see the proof of Lemma \ref{lem-rel-min-cap}).
\end{rmk}

\begin{rmk}\label{concave}
	A toric domain $X_{\Omega}\subset\R^{2n}$ is called {\it concave} if $\R^{n}_{\geq 0}\setminus\Omega$ is convex (see the second picture in Figure \ref{fig_strict}). For strictly monotone and concave toric domains $X_\Omega$, the inequality in Theorem \ref{main-thm-2} can be quantified (see Remark \ref{concave2}):~for any $0<\delta<1$, there exists a $C^1$-neighborhood $\mathcal U(\delta)$ such that every $X \in \mathcal U(\delta)$ satisfies
	\[
	\sup_{\Lambda\in\mathcal{L}_{\rm torus}(X)} \mathcal A_{\rm min}(\Lambda) < \frac{1}{n\delta} \, \mathcal A_{\rm min}(X).
	\]
\end{rmk}

\begin{rmk}\label{convex}
	A toric domain $X_{\Omega}\subset\R^{2n}$ is called {\it convex} if the set $\{(x_1,\dots,x_n)\in\R^{n}\mid (|x_1|,\dots,|x_n|)\in\Omega\}$ is convex (see the third picture in Figure \ref{fig_strict}). In a sharp contrast to concave toric domains, in the set of strictly monotone and convex toric domains, $\displaystyle\sup_{\Lambda\in\mathcal{L}_{\rm torus}(X_\Omega)} \mathcal A_{\rm min}(\Lambda)$ and  $\mathcal A_{\rm min}(X_{\Omega})$ can be arbitrarily close to each other. 
\end{rmk}


\begin{rmk}Adapting the proof of \cite[Theorem 1.2]{Kan23}, it appears feasible that one can prove the following. For every strictly monotone toric domain $X_\Omega\subset\R^{2n}$ and  closed Legendrian submanifold $\Lambda \subset \partial X_{\Omega}$ (not necessarily torus), $\mathcal A_{\rm min}(\Lambda) < \mathcal A_{\rm min}(X_{\Omega})$. This would answer the strong Arnold chord conjecture for these $X_{\Omega}$. However, this inequality is not sufficient to draw the same conclusion for star-shaped domains $C^1$-close to $X_\Omega$. 
\end{rmk}

%
%

\subsection*{Acknowledgements} This project was initiated by second author's academic visit to Seoul National University (SNU) in October 2023. He thanks the warm invitation from the first author, as well as the active research atmosphere at SNU. The first author is supported by National Research Foundation of Korea grant 2020R1A5A1016126 and RS-2023-00211186. He thanks the second author for his warm hospitality at USTC-IGP. The second author is partially supported by National Key R\&D Program of China No.~2023YFA1010500, NSFC No.~12301081, NSFC No.~12361141812, and USTC Research Funds of the Double First-Class Initiative. The second author also thanks useful communications with Zhengyi Zhou. 

\section{Proofs} \label{sec-proof}

We recall from \cite[Section 2]{GH} the following observation for a compact toric domain $X_\Omega\subset\R^{2n}$ with smooth boundary. Let $w=(w_1,\dots,w_n)\in\partial\Omega$, and let $\nu=(\nu_1,\dots,\nu_n)$ be an outward normal vector of $\Omega$ at $w$. The Reeb vector field $R$ on $\partial X_\Omega$ at $z\in\mu^{-1}(w)$ is  given by
\begin{equation}\label{reeb}
R_z=\frac{2\pi}{w\cdot \nu}\sum_{w_i\neq0}\nu_i\frac{\partial}{\partial\theta_i}=\frac{2\pi}{w\cdot \tilde\nu}\sum_{i=1}^n\tilde\nu_i\frac{\partial}{\partial\theta_i},
\end{equation}
where $\tilde \nu:=(\tilde\nu_1,\dots,\tilde\nu_n)$ is defined by $\tilde\nu_i=\nu_i$ if $w_i\neq 0$ and $\tilde \nu_i=0$ otherwise. If $\tilde\nu$ is a scalar multiple of an integral vector, then the torus $\mu^{-1}(w)$ is foliated by closed Reeb orbits with period $w\cdot \nu=w\cdot\widetilde \nu$. 

In general, the minimal period $\mathcal A_{\rm min}$ does not behave like a symplectic capacity;  a symplectic embedding $X \stackrel{s}{\hookrightarrow} Y$ of two star-shaped domains $X$ and $Y$ may badly violate the monotonicity property $\mathcal A_{\rm min}(X) \leq \mathcal A_{\rm min}(Y)$. However, it is known that, for convex domains, $\mathcal A_{\rm min}$ is a symplectic capacity. The following lemma shows that the same holds for strictly monotone toric domains. We recall that the Gromov width (also known as the ball capacity) is defined by 
\[
c_{\rm Gr}(X):=\sup \{r>0\mid B^{2n}(r)\stackrel{s}{\hookrightarrow} X\},\quad\text{where}\;\; B^{2n}(r):=\{z\in \C^n \;|\; \pi|z|^2\leq r\}.
\] 

\begin{lemma} \label{lem-abs-min-cap} For every  strictly monotone toric domain  $X_{\Omega}$, $\mathcal A_{\rm min}(X_{\Omega})$ coincides with $c_{\rm Gr}(X_{\Omega})$. Moreover, it coincides with any other normalized symplectic capacity.  
\end{lemma}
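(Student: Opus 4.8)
\emph{Plan.} Write $Z^{2n}(r):=\{z\in\C^n:\pi|z_1|^2\le r\}$ for the standard symplectic cylinder and $c_Z(X):=\inf\{r>0: X\stackrel{s}{\hookrightarrow}Z^{2n}(r)\}$ for the cylindrical capacity. The two elementary monotonicity observations $B^{2n}(r)\stackrel{s}{\hookrightarrow}X\Rightarrow r\le c(X)$ and $X\stackrel{s}{\hookrightarrow}Z^{2n}(r)\Rightarrow c(X)\le r$, valid for every normalized symplectic capacity $c$, show that $c_{\rm Gr}$ is the smallest and $c_Z$ the largest normalized capacity, so that $c_{\rm Gr}(X)\le c(X)\le c_Z(X)$ for all $c$ and all star-shaped $X$. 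It therefore suffices to prove the two bounds
\[
\mathcal A_{\rm min}(X_\Omega)\le c_{\rm Gr}(X_\Omega)\qquad\text{and}\qquad c_Z(X_\Omega)\le \mathcal A_{\rm min}(X_\Omega),
\]
since then $\mathcal A_{\rm min}(X_\Omega)\le c_{\rm Gr}(X_\Omega)\le c(X_\Omega)\le c_Z(X_\Omega)\le \mathcal A_{\rm min}(X_\Omega)$ collapses for every normalized $c$. Reading \eqref{reeb}, a point $w\in\partial\Omega$ supports a closed Reeb orbit precisely when its reduced normal $\tilde\nu$ is a multiple of a primitive integral vector $\nu$, and the orbit then has period $w\cdot\nu$; hence $\mathcal A_{\rm min}(X_\Omega)=\min\{w\cdot\nu\}$ over all such pairs, and I fix a minimizer $(w^\ast,\nu^\ast)$.

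\emph{Lower bound.} Strict monotonicity makes $\Omega$ downward closed, so the largest standard simplex $\Delta_s:=\{x\in\R^n_{\ge0}:x_1+\dots+x_n\le s\}$ contained in $\Omega$ touches $\partial\Omega$ along its outer face. At a touching point lying in the open orthant the outward normal must be parallel to $(1,\dots,1)$, so that point carries a closed Reeb orbit of period exactly $s$; if instead the touching occurs on a coordinate face, the same argument applied on that face produces a Reeb orbit of period at most $s$. Either way $\mathcal A_{\rm min}(X_\Omega)\le s$, and since $X_{\Delta_s}=B^{2n}(s)$ the inclusion $B^{2n}(\mathcal A_{\rm min}(X_\Omega))\subseteq X_\Omega$ is a symplectic embedding, giving $\mathcal A_{\rm min}(X_\Omega)\le c_{\rm Gr}(X_\Omega)$.

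\emph{Upper bound (the crux).} When the tangent hyperplane of $\Omega$ at $w^\ast$ supports $\Omega$ globally, i.e.\ $\Omega\subseteq\{x:x\cdot\nu^\ast\le\mathcal A_{\rm min}(X_\Omega)\}$, the bound is immediate: completing the primitive vector $\nu^\ast$ to a matrix in $SL_n(\Z)$ induces a linear symplectomorphism of $\C^n$ carrying $X_{\{x\cdot\nu^\ast\le\mathcal A_{\rm min}\}}$ onto $Z^{2n}(\mathcal A_{\rm min}(X_\Omega))$, whence $c_Z(X_\Omega)\le\mathcal A_{\rm min}(X_\Omega)$. This global support always holds when $\Omega$ is convex, in which case one checks $\nu^\ast=e_{i_0}$ with $a_{i_0}:=\max_\Omega x_{i_0}=\min_i a_i=\mathcal A_{\rm min}(X_\Omega)$, so that $X_\Omega$ already sits inside the coordinate cylinder. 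The difficulty is that strict monotonicity does \emph{not} force global support: for a strictly monotone \emph{concave} domain the minimizing orbit sits on the diagonal and $\mathcal A_{\rm min}(X_\Omega)<\min_i a_i$, so $X_\Omega$ genuinely pokes out of every cylinder of radius $\mathcal A_{\rm min}(X_\Omega)$ and must be squeezed by a non-trivial symplectic embedding. In dimension four I would obtain this from the sharpness of ECH capacities for embeddings of toric domains into cylinders (Cristofaro--Gardiner, McDuff): since a cylinder is an increasing union of convex toric domains with $c_k^{\rm ECH}(Z^{2n}(A))=kA$, one gets $c_Z(X_\Omega)=\sup_k c_k^{\rm ECH}(X_\Omega)/k$, and it remains to verify $c_k^{\rm ECH}(X_\Omega)\le k\,\mathcal A_{\rm min}(X_\Omega)$; the latter follows from the ball (weight) decomposition of a concave domain, whose largest ball realizes $c_1^{\rm ECH}(X_\Omega)=\mathcal A_{\rm min}(X_\Omega)$, together with $c_k^{\rm ECH}(B)\le k\,c_1^{\rm ECH}(B)$ and the disjoint-union formula for ECH capacities. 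In higher dimensions the same scheme would be run with Gutt--Hutchings capacities in place of ECH.

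\emph{Main obstacle.} The essential difficulty is precisely this upper bound away from the convex case: the convex regime reduces to a linear inclusion, whereas the concave and mixed regimes require sharp (non-)squeezing into a thin cylinder, and the cleanest route is to collapse $c_Z$ to the single inequality $c_k\le k\,c_1$. Verifying that this reduction, or an equivalent direct toric embedding, applies to \emph{every} strictly monotone $\Omega$ -- and not merely to the convex and concave extremes -- is where the real work lies.
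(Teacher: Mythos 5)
Your lower bound, $\mathcal A_{\rm min}(X_\Omega)\le c_{\rm Gr}(X_\Omega)$ via the largest inscribed simplex touching $\partial\Omega$, is essentially the first half of the paper's argument and is fine. The problem is the upper bound. By routing everything through $c_{\rm Gr}\le c\le c_Z$ and then trying to prove $c_Z(X_\Omega)\le\mathcal A_{\rm min}(X_\Omega)$, you have set yourself the task of reproving from scratch the theorem that all normalized capacities coincide on monotone toric domains. The paper does not do this: it \emph{cites} that theorem (\cite[Theorem 1.1]{CG-H-c}), together with the fact from its proof that $c_{\rm Gr}(X_\Omega)=R$ is realized by the inclusion $B^{2n}(R)\subset X_\Omega$, and the only new content of the lemma is the identification $\mathcal A_{\rm min}(X_\Omega)=R$. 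Your sketch of $c_Z\le\mathcal A_{\rm min}$ does not close as written: sharpness of ECH capacities for embeddings into a cylinder is available for concave (and convex) four-dimensional toric domains, but a strictly monotone toric domain is in general neither, so the reduction $c_Z=\sup_k c_k^{\rm ECH}/k$ plus the weight decomposition is not applicable to the whole class; and in higher dimensions the Gutt--Hutchings capacities are not known to be sharp for such embeddings, so ``the same scheme'' is not a proof. You flag this yourself as where the real work lies --- it is, and it is work the cited literature has already done, so it should be quoted rather than reconstructed.

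What your write-up is missing is the short, elementary step that makes the citation sufficient, namely $\mathcal A_{\rm min}(X_\Omega)\ge c_{\rm Gr}(X_\Omega)$, and this is exactly where strict monotonicity is used. Since $B^{2n}(R)\subset X_\Omega$, the corresponding simplex satisfies $\Delta\subset\Omega$, so every $w\in\partial\Omega$ has $w_1+\cdots+w_n\ge R$. Every closed Reeb orbit lies on a torus $\mu^{-1}(w)$ whose reduced normal $\tilde\nu$ is a multiple of a primitive integral vector, and by \eqref{reeb} its period is $w\cdot\tilde\nu$ with $\tilde\nu$ primitive integral. Strict monotonicity forces $\tilde\nu_i\ge1$ for every $i$ with $w_i\ne0$, hence $w\cdot\tilde\nu\ge w_1+\cdots+w_n\ge R$, so no closed orbit has period below $R$. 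Combined with your lower bound this gives $\mathcal A_{\rm min}(X_\Omega)=R=c_{\rm Gr}(X_\Omega)$, and the cited theorem upgrades $c_{\rm Gr}$ to an arbitrary normalized capacity. (Note that this inequality genuinely fails for merely monotone domains, where some $\tilde\nu_i$ may vanish on $\partial\Omega\cap\R^n_{>0}$; compare the counterexample given in the introduction.)
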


\begin{proof} The second conclusion is due to \cite[Theorem 1.1]{CG-H-c}, which proves that, on a monotone toric domain (not necessarily strictly monotone), all normalized symplectic capacities coincide. The proof of \cite[Theorem 1.1]{CG-H-c} shows that $c_{\rm Gr}(X_{\Omega}) = R$ is achieved by the inclusion  $B^{2n}(R)\subset X_{\Omega}$. 
Note that $\partial B^{2n}(R)$ intersects $\partial X_\Omega$ along a torus family of closed Reeb orbits of period $R$.  

Now, we view $B^{2n}(R) = X_{\Delta}$ as the toric domain associated with the standard simplex $\Delta \subset \R^n$ rescaled by $R$. Since $B^{2n}(R)\subset X_{\Omega}$,  $\Delta\subset\Omega$ and thus every $w\in \partial\Omega$ satisfies $w_1 + \cdots +w_n  \geq R$, where the equality holds exactly when $w\in\partial\Omega\cap\partial\Delta$. 
Let $w \in \partial \Omega $ be a point such that the vector $\tilde\nu = (\tilde\nu_1, ..., \tilde\nu_n)$ modified from an outward normal vector $\nu$ at $w$, see above, is a scalar multiple of an integral vector. Without loss of generality, we may assume that $\tilde\nu$ is integral and the greatest common divisor of $\tilde\nu_1,\dots,\tilde\nu_n$ equals $1$. Then the torus  $\mu^{-1}(w)$ is foliated by closed Reeb orbits of period $w\cdot\tilde\nu$. 
 Moreover, by the definition of  $X_\Omega$ being strictly monotone, $\widetilde\nu_i \geq 1$ unless $w_i=0$, and the equalities hold for $w\in\partial\Omega\cap\partial\Delta$. Hence, $w \cdot \tilde\nu\geq w_1 + \cdots +w_n  \geq R$, and the equality holds for $w\in\partial\Omega\cap\partial\Delta$. 
This proves  $\mathcal A_{\rm min}(X_{\Omega}) = R = c_{\rm Gr}(X_{\Omega})$. \end{proof}

Next, we show that, similar to Lemma \ref{lem-abs-min-cap}, $\sup_{\Lambda}\mathcal A_{\rm min}(\Lambda)$ over Legendrian tori $\Lambda$ coincides with a symplectic capacity satisfying a different normalization condition for strictly monotone toric domains. To state this precisely, recall from \cite{CM18} that the Lagrangian capacity $c_{\rm Lag}$ of a start-shaped domain $X\subset\R^{2n}$ is defined by
\begin{equation} \label{dfn-Lag-cap} 
c_{\rm Lag}(X) : = \sup_{L} \inf \Big\{\int_\sigma\omega \;\Big|\;\sigma\in\pi_2(X,L),\;\int_\sigma\omega >0 \Big\},
\end{equation}
where the supremum ranges over all Lagrangian tori $L$ contained in $X$. 
Under the assumption on the existence of a virtual perturbation scheme in \cite[Assumption 7.1]{Per}, the Lagrangian capacity is computed for convex or concave toric domains in \cite[Theorem 7.65]{Per}. In particular, $c_{\rm Lag}(P^{2n}(a)) = c_{\rm Lag}(N^{2n}(a)) = a$, where 
\[
\begin{split}
	P^{2n}(a)&:=\{z\in \C^n \mid \pi|z_i|^2\leq a\; \text{ for all } 1\leq i\leq  n \},\\
	N^{2n}(a)&:=\{z\in \C^n \mid \pi|z_i|^2\leq a\; \text{ for some } 1\leq i\leq  n \}.
\end{split}
\]
Therefore, according to Definition 4 in \cite{GPR}, $c_{\rm Lag}$ is a cube normalized capacity. Another cube normalized capacity is defined by $c_{\square}(X):=\sup \{a>0\mid P^{2n}(a)\stackrel{s}{\hookrightarrow} X \}$.

\begin{lemma} \label{lem-rel-min-cap} We assume \cite[Assumption 7.1]{Per}. Then,  for every strictly monotone toric domain $X_{\Omega}\subset\R^{2n}$, we have 
\[
\sup_{\Lambda\in \mathcal{L}_{\rm torus}(X_{\Omega})} \mathcal A_{\rm min}(\Lambda)=c_{\rm Lag}(X_{\Omega})=c_{\square}(X_{\Omega}).
\]
Accordingly, $\displaystyle\sup_{\Lambda\in \mathcal{L}_{\rm torus}(X_{\Omega})}\mathcal A_{\rm min}(\Lambda)$ coincides with any cube normalized capacity.  \end{lemma}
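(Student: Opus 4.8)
The plan is to reduce the entire statement to the single quantity
$a^{*} := \max\{a \ge 0 \mid (a,\dots,a)\in\Omega\}$, the edge length of the largest cube contained in $\Omega$, and to show that all three quantities equal $a^{*}$.

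First I would dispose of the two capacities. Because $X_\Omega$ is strictly monotone, $\Omega$ is downward closed, so $(a^{*},\dots,a^{*})\in\partial\Omega$ forces $[0,a^{*}]^{n}\subseteq\Omega$, i.e.\ $P^{2n}(a^{*})\subseteq X_\Omega$; and maximality of $a^{*}$ means that no point of $\Omega$ has all coordinates $>a^{*}$, i.e.\ $X_\Omega\subseteq N^{2n}(a^{*})$. Since both $c_{\rm Lag}$ and $c_{\square}$ are cube normalized, meaning $c(P^{2n}(a))=c(N^{2n}(a))=a$ (for $c_{\rm Lag}$ this is the computation of \cite[Theorem 7.65]{Per} recalled above, which is where \cite[Assumption 7.1]{Per} enters), applying monotonicity to the sandwich $P^{2n}(a^{*})\subseteq X_\Omega\subseteq N^{2n}(a^{*})$ yields $a^{*}\le c(X_\Omega)\le a^{*}$ for \emph{every} cube normalized capacity $c$. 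In particular $c_{\rm Lag}(X_\Omega)=c_{\square}(X_\Omega)=a^{*}$, and the final assertion of the lemma follows at once.

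It then remains to prove $\sup_{\Lambda}\mathcal A_{\rm min}(\Lambda)=a^{*}$. For the lower bound I would write down an explicit Legendrian torus. By strict monotonicity the point $(a^{*},\dots,a^{*})\in\partial\Omega\cap\R^{n}_{>0}$ has an outward normal with strictly positive entries, so the fibre $\mu^{-1}(a^{*},\dots,a^{*})\cong T^{n}$ lies in $\partial X_\Omega$ and carries the $1$-form $\lambda=\frac{a^{*}}{2\pi}\sum_{i=1}^{n} d\theta_i$. The subtorus $\Lambda=\{\theta_1+\dots+\theta_n=\text{const}\}\cong T^{n-1}$ is then Legendrian. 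By the Reeb formula \eqref{reeb} the Reeb flow is the linear flow in the fixed normal direction and stays inside this fibre, so every Reeb chord of $\Lambda$ is a segment of that linear flow; a direct computation shows such a segment returns to $\Lambda$ exactly at the integer multiples of $a^{*}$, and is a genuine chord for $n\ge 2$. Hence $\mathcal A_{\rm min}(\Lambda)=a^{*}$ and $\sup_{\Lambda}\mathcal A_{\rm min}(\Lambda)\ge a^{*}$.

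The main obstacle is the reverse inequality $\sup_{\Lambda}\mathcal A_{\rm min}(\Lambda)\le c_{\rm Lag}(X_\Omega)$, which must hold for an arbitrary---possibly non-toric---Legendrian torus $\Lambda\subset\partial X_\Omega$. The plan is to attach to $\Lambda$ a Lagrangian torus $L\subset X_\Omega$ whose minimal positive symplectic area is at least $\mathcal A_{\rm min}(\Lambda)$: pushing $\Lambda$ inward along the Liouville flow produces a Lagrangian cone over $\Lambda$, and closing it up in the Reeb direction using a minimal Reeb chord (a Lagrangian suspension construction in the spirit of \cite{CM18}) yields a Lagrangian torus for which the disk class in the Reeb--Liouville direction has area $\mathcal A_{\rm min}(\Lambda)$ while the remaining classes in $\pi_2(X_\Omega,L)$ can be made large. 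This gives $c_{\rm Lag}(X_\Omega)\ge \mathcal A_{\rm min}(\Lambda)$ for each $\Lambda$, hence $\sup_{\Lambda}\mathcal A_{\rm min}(\Lambda)\le c_{\rm Lag}(X_\Omega)$. Combined with the previous two paragraphs we obtain $a^{*}=c_{\square}(X_\Omega)\le\sup_{\Lambda}\mathcal A_{\rm min}(\Lambda)\le c_{\rm Lag}(X_\Omega)=a^{*}$, closing the chain. Ensuring that this suspension is embedded and Lagrangian, and that the claimed area lower bound genuinely holds for every Legendrian torus (and not merely for the toric fibre examples of the previous step), is the delicate point on which the whole argument rests.
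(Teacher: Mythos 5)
Your proposal follows essentially the same route as the paper's proof: the paper likewise identifies $\sup_{\Lambda}\mathcal A_{\rm min}(\Lambda)$ with $c_{\rm Lag}(X_{\Omega})=c_{\square}(X_{\Omega})$ via cube normalization (citing \cite[Theorem 2]{GPR} where you use the sandwich $P^{2n}(a^{*})\subseteq X_{\Omega}\subseteq N^{2n}(a^{*})$, which amounts to the same computation), exhibits exactly the same Legendrian torus $\{\sum_i\theta_i=\mathrm{const}\}$ in the fibre $\mu^{-1}(a^{*},\dots,a^{*})$ for the lower bound, and invokes the proof of \cite[Theorem 4]{Moh} for the key inequality $\sup_{\Lambda}\mathcal A_{\rm min}(\Lambda)\leq c_{\rm Lag}(X_{\Omega})$, which you sketch rather than cite. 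One correction to that sketch: in the suspension torus the disk classes whose boundaries lie in the $\Lambda$-directions have symplectic area \emph{zero} (since $\lambda$ vanishes on the Legendrian and is Reeb-invariant), not ``large'' --- this is harmless because zero-area classes are excluded from the infimum in \eqref{dfn-Lag-cap} --- and the class over the closing curve only achieves area arbitrarily close to, not equal to, $\mathcal A_{\rm min}(\Lambda)$, which still suffices because $c_{\rm Lag}$ is defined as a supremum.
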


\begin{proof} The second conclusion follows from \cite[Theorem 2]{GPR}, which proves that, on a monotone toric domain (not necessarily strictly monotone), all cube normalized symplectic capacities coincide. In particular, $c_{\rm Lag}(X_{\Omega})=c_{\square}(X_{\Omega})$. Moreover, $c_{\square}(X_{\Omega}) = a$ is achieved by the inclusion $P^{2n}(a)\subset X_{\Omega}$, see \cite[Theorem 11]{GPR}. 

The proof of \cite[Theorem 4]{Moh} shows
\begin{equation} \label{Moh-inequality}
\sup_{\Lambda\in \mathcal{L}_{\rm torus}(X_{\Omega})} \mathcal A_{\rm min}(\Lambda) \leq c_{\rm Lag}(X_{\Omega}).
\end{equation}
In what follows, we prove the opposite inequality.  We consider the Lagrangian torus $L_a:=\mu^{-1}(a, ..., a)$, which is  contained in $\partial X_\Omega$ since $P^{2n}(a)\subset X_\Omega$ and $c_{\square}(X_{\Omega}) = a$ as mentioned above. This Lagrangian torus is foliated by isotopic Legendrian tori $\Lambda_a\subset L_a$, and we will prove $\mathcal{A}_{\min}(\Lambda_a)=a$. The case of $\dim X_{\Omega} = 4$ and $\dim L_a=2$ is illustrated in Figure \ref{fig_Leg_torus}, where $L_a$ is pictured by a square in the coordinates $(\theta_1, \theta_2)$ with facing edges identified.
 \begin{figure}[h]
\includegraphics[scale=0.8]{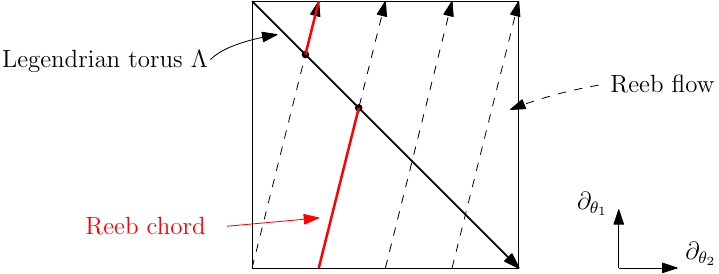}
\centering
\caption{Legendrian torus $\Lambda$ and Reeb chord.} \label{fig_Leg_torus}
\end{figure}
Let $\nu$ be an outward normal vector of $\Omega$ at $a$. The contact structure $\xi:=\ker(\lambda|_{\partial X_{\Omega}})$ at $z\in L_a$ can be described as
 	\[
 	\xi_z= \Big\{\sum_{i=1}^n \Big( x_i\frac{\partial}{\partial\mu_i}+y_i\frac{\partial}{\partial\theta_i}\Big) \;\Big|\; \sum_{i=1}^n\nu_ix_i=0,\; \sum_{i=1}^n y_i=0\Big\},
 	\]
 	where $\mu_i:=\pi r_i^2$, see \cite[Section 2]{GH}.
 	Thus, for any $0\leq c< 2\pi$, the torus $\Lambda_a\subset L_a$ defined by $\sum_{i=1}^n\theta_i=c$ is Legendrian. The negative diagonal in Figure \ref{fig_Leg_torus} corresponds to the Legendrian knot $\theta_1+\theta_2=0$. Recall from \eqref{reeb} that the Reeb vector field at $z\in L_a$ has the form
 	$
	R_z=\frac{2\pi}{a\cdot \nu}\sum_{i=1}^n\nu_i\frac{\partial}{\partial\theta_i}.
	$
 	A straightforward computation shows that every minimal Reeb chord with endpoints on $\Lambda_a$ has period $a$. 
Therefore, 
\[ \sup_{\Lambda\in\mathcal{L}_{\rm torus}(X_{\Omega})} \mathcal A_{\rm min}(\Lambda)  \leq c_{\rm Lag}(X_{\Omega}) = c_{\square}(X_{\Omega})=\mathcal{A}_{\rm min}(\Lambda_a) \leq \sup_{\Lambda\in\mathcal{L}_{\rm torus}(X_{\Omega})}  \mathcal A_{\rm min}(\Lambda), \]
and this completes the proof. \end{proof}

\begin{rmk} \label{rmk-no-assumption}The proof of Lemma \ref{lem-rel-min-cap} relies on the fact that the Lagrangian capacity $c_{\rm Lag}$ is cube normalized, which holds as of now under the technical assumption \cite[Assumption 7.1]{Per}. However, thanks to automatic transversality, we do not need this assumption in dimension 4 (see \cite[Theorem 6.41]{Per} and \cite[Remark 8]{GPR}). 
\end{rmk}

Finally, we are ready to give the proof of Theorem \ref{main-thm-2}. 

\begin{proof}[Proof of Theorem \ref{main-thm-2}] As mentioned above,   $c_{\square}(X_{\Omega})=a$ is achieved by the inclusion $P^{2n}(a)\subset X_{\Omega}$. Since $B^{2n}(a) \subset P^{2n}(a)$ and $X_{\Omega}$ is strictly monotone, there exists some $\kappa_{\Omega}>0$ (only depending on the moment image $\Omega$) such that $B^{2n}(a+\kappa_{\Omega}) \subset X_{\Omega}$. For a schematic picture, see the leftmost picture in Figure \ref{fig_strict}. 
Therefore, we have  
\begin{equation} \label{strict-inequality} 
c_{\square}(X_{\Omega}) = a < a+ \kappa_{\Omega} \leq c_{\rm Gr}(X_{\Omega}),
\end{equation}
and this in turn yields
\begin{equation} \label{gap}
\sup_{\Lambda\in\mathcal{L}_{\rm torus}(X_\Omega)} \mathcal A_{\rm min}(\Lambda) = c_{\rm Lag}(X_{\Omega}) = c_{\square}(X_{\Omega}) \leq  c_{\rm Gr}(X_{\Omega}) - \kappa_{\Omega} = \mathcal A_{\rm min}(X_{\Omega}) - \kappa_{\Omega}, 
\end{equation}
where the first two equalities come from Lemma \ref{lem-rel-min-cap} and the last equality is due to Lemma \ref{lem-abs-min-cap}. In particular, we obtain 
\begin{equation}\label{strict_ineq}
	\sup_{\Lambda\in\mathcal{L}_{\rm torus}( X_\Omega)} \mathcal A_{\rm min}(\Lambda) < \mathcal A_{\rm min}(X_{\Omega}).
\end{equation}
\begin{figure}[h]
\includegraphics[scale=0.72]{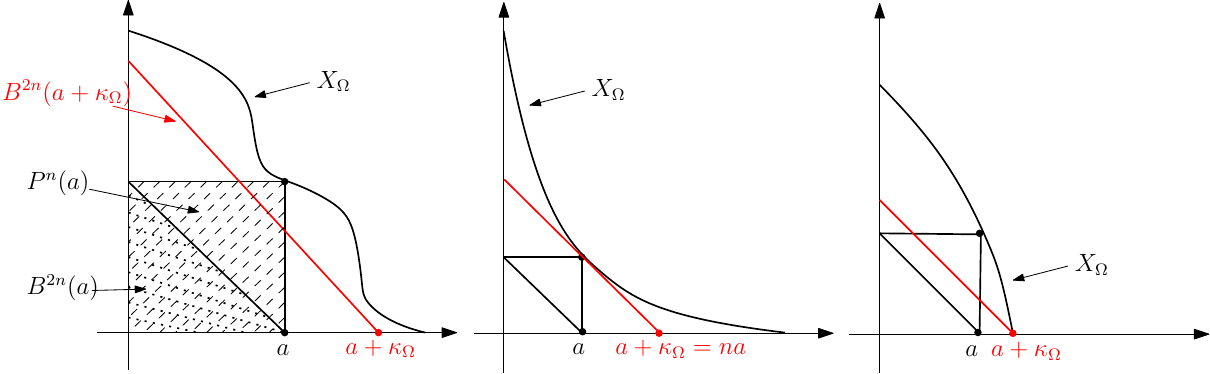}
\centering
\caption{$B^{2n}(a)$ and $P^{2n}(a)$ in $X_{\Omega}$.} \label{fig_strict}
\end{figure}

Next, we observe that \eqref{strict_ineq} remains true for star-shaped domains sufficiently $C^1$-close to $X_\Omega$. 
This follows from the simple observation that, for any $\ep>0$, there exists a $C^1$-neighborhood $\mathcal U(\ep)$ of $X_\Omega$ in the space of star-shaped domains with the following two properties. First, for every $X\in\mathcal U(\ep)$, 
\begin{equation} \label{per-1}
\mathcal A_{\rm min}(X) > \mathcal A_{\rm min}(X_{\Omega}) - \ep.
\end{equation} 
This readily follows from the Arzela-Ascoli theorem. Second, for every $X \in \mathcal U(\ep)$, \begin{equation} \label{per-2}
\sup_{\Lambda\in\mathcal{L}_{\rm torus}( X)} \mathcal A_{\rm min}(\Lambda) \leq c_{\rm Lag}(X) < c_{\rm Lag}(X_\Omega)+\ep. 
\end{equation}
The first inequality follows from \cite[Theorem 4]{Moh}, as pointed out in \eqref{Moh-inequality}, and the second one is due to the fact that $c_{\rm Lag}$ is continuous with respect to the Hausdorff metric, and thus also to the $C^1$-topology. 

We take $\ep = \frac{\kappa_{\Omega}}{2}$. Then (\ref{gap}), (\ref{per-1}), and (\ref{per-2}) together imply that, for every star-shaped domain $X \in \mathcal U(\frac{\kappa_{\Omega}}{2})$, we have 
\[ 
\sup_{\Lambda\in\mathcal{L}_{\rm torus}( X)} \mathcal A_{\rm min}(\Lambda)  < c_{\rm Lag}(X_{\Omega}) + \frac{\kappa_{\Omega}}{2} \leq \mathcal A_{\rm min}(X_{\Omega}) - \frac{\kappa_{\Omega}}{2} < \mathcal A_{\rm min}(X). \]
This completes the proof, with the desired $C^1$-neighborhood $\mathcal U=\mathcal U(\frac{\kappa_{\Omega}}{2})$. \end{proof}

\begin{proof}[Proof of Theorem \ref{main-thm}] In dimension 4, the assumption \cite[Assumption 7.1]{Per}  can be dropped as explained in Remark \ref{rmk-no-assumption}, and a toric domain is dynamically convex if and only if it is strictly convex. Thus Theorem \ref{main-thm} follows from Theorem \ref{main-thm-2}. 
\end{proof}

\begin{rmk}\label{concave2}
	For a concave toric domain $X_{\Omega}\subset\R^{2n}$,  the estimate (\ref{strict-inequality}) can be improved by $nc_{\square}(X_{\Omega})=c_{\rm Gr}(X_{\Omega})$ since if $P^{2n}(a) \subset X_{\Omega}$ realizes $c_{\square}(X_{\Omega})$, then $B^{2n}(na) \subset X_{\Omega}$ realizes $c_{\rm Gr}(X_{\Omega})$. The claimed inequality in Remark \ref{concave} follows.
\end{rmk}

\bibliographystyle{amsplain}
\bibliography{biblio_AC_DC}

\medskip

\end{document}